\newtheorem{theorem}{Theorem}
\newtheorem{lemma}[theorem]{Lemma}
\newtheorem{cor}[theorem]{Corollary}
\theoremstyle{definition}
\newtheorem{definition}[theorem]{Definition}
\newtheorem{example}[theorem]{Example}
\theoremstyle{remark}
\newcommand{\legendre}[2]{\genfrac{(}{)}{}{}{#1}{#2}}
\newcommand{\blankbox}[2]{%
\parbox{\columnwidth}{\centering
}%
}
\title{Complete Generalized Fibonacci Sequences Modulo Primes}
\author{Mohammad Javaheri, Nikolai A. Krylov\\ ~ \\
Siena College, Department of Mathematics\\
515 Loudon Road, Loudonville NY 12211, USA\\ ~ \\
mjavaheri@siena.edu, nkrylov@siena.edu
}
\date{}
\begin{document}
\maketitle
\begin{abstract}
We study generalized Fibonacci sequences $F_{n+1}=PF_n-QF_{n-1}$ with initial values $F_0=0$ and $F_1=1$. Let $P,Q$ be nonzero integers such that $P^2-4Q$ is not a perfect square. We show that if $Q=\pm 1$ then the sequence $\{F_n\}_{n=0}^\infty$ misses a congruence class modulo every prime large enough. On the other hand, if $Q \neq \pm 1$, we prove that (under GRH) the sequence $\{F_n\}_{n=0}^\infty$ hits every congruence class modulo infinitely many primes. 
\end{abstract}
A generalized Fibonacci sequence with integer parameters $(P,Q)$ is a second order homogeneous 
difference equation generated by 
\begin{equation}\label{gfequation}
F_{n+1} = P F_{n} -Q F_{n-1},
\end{equation}
$n\geq 1$, with initial values $F_0=0$ and $F_1=1$; we denote this sequence by $F=[P,Q]$. In this paper, we are interested in studying the completeness problem for generalized Fibonacci sequences modulo primes. 

\begin{definition}
A sequence $\{F_n\}_{n=0}^\infty$ is said to be complete modulo a prime $p$, if the map $\mathfrak{e}: \mathbb{N}_0 \rightarrow \mathbb{Z}_p=\mathbb{Z}/p \mathbb{Z}$ defined by $\mathfrak{e}(n)=F_n +p \mathbb{Z}$ is onto. In other words, the sequence is complete modulo $p$ iff
$$\forall k \in \mathbb{Z}~ \exists n\geq 0~F_n \equiv k \pmod p.$$
\end{definition}

The study of completeness problem for the Fibonacci sequence (with parameters $(1,-1)$) began with Shah \cite{Shah} who showed that the Fibonacci sequence is not complete modulo primes $p \equiv 1, 9 \pmod{10}$. Bruckner \cite{Bruckner} proved that the Fibonacci sequence is not complete modulo every prime $p>7$. Somer \cite{Somer} proved that if $p \nmid (P^2+4)$ then $[P,-1]$ is not complete modulo $p$ for every $p>7$ with $p \not\equiv 1, 9 \pmod{20}$. Schinzel \cite{Schinzel} improved Somer's result for all other congruence classes; see also \cite{li}. 

A primitive root modulo a prime $p$ is any integer that generates the multiplicative group modulo $p$. Somer \cite{Somer} observed that, if $Q$ is a primitive root modulo prime $p$ and $P^2-4Q$ is not a quadratic residue modulo $p$, then $[P,Q]$ is complete modulo $p$. Therefore, if $Q$ is a primitive root modulo infinitely many primes, then $[P,Q]$ is complete modulo infinitely many primes. 

Artin's conjecture states that if $Q \neq - 1$ and $Q$ is not a perfect square, then $Q$ is a primitive root modulo infinitely many primes. Hooley \cite{Hooley} proved that Artin's conjecture follows form the Generalized Riemann Hypothesis (GRH). Our main result is Theorem \ref{main} which, under GRH, establishes a dichotomy regarding completeness modulo infinitely many primes. 
\begin{theorem}\label{main}
Let $P,Q$ be nonzero integers and suppose that $P^2-4Q$ is not a perfect square. Then the following statements hold.
\begin{itemize}
\item[(i)] If $Q =\pm 1$ then $[P,Q]$ is complete modulo only finitely many primes. 
\item[(ii)] (GRH) If $Q \neq \pm 1$ then $[P,Q]$ is complete modulo infinitely many primes. 
\end{itemize}
\end{theorem}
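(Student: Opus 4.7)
For part (i), with $Q = \pm 1$ the roots $\alpha,\beta$ of $x^2 - Px + Q$ satisfy $\alpha\beta = \pm 1$, so $\beta = \pm\alpha^{-1}$, and the Binet formula $F_n = (\alpha^n - \beta^n)/(\alpha - \beta)$ becomes a function of the single power $\alpha^n$ in a cyclic subgroup of $\mathbb{F}_p^{\times}$ (if $D = P^2 - 4Q$ is a quadratic residue mod $p$) or of $\mathbb{F}_{p^2}^{\times}$ (if $D$ is a QNR). I would exploit the symmetry $F_{r-n} = -F_n$ for $Q = 1$ (where $r = \mathrm{ord}(\alpha)$), together with $r \mid p-1$ or $r \mid p+1$, to conclude $|\{F_n \bmod p\}| \le (p+c)/2$ for some absolute constant $c$, which is strictly less than $p$ once $p$ is large. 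For $Q = -1$ an analogous parity-split argument applies, and the conclusion also follows directly from the results of Bruckner, Somer, and Schinzel cited in the introduction.

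For part (ii), the backbone is Somer's criterion: if $Q$ is a primitive root modulo $p$ and $D$ is a QNR modulo $p$, then $[P,Q]$ is complete modulo $p$. When $Q \neq \pm 1$ is not a perfect square, Hooley's theorem (under GRH) produces infinitely many primes at which $Q$ is a primitive root. To impose simultaneously that $D$ be a QNR, I would adapt Hooley's inclusion--exclusion over the Kummer--cyclotomic fields $\mathbb{Q}(\zeta_\ell, Q^{1/\ell})$ by intersecting each layer with the quadratic Frobenius condition coming from $\mathbb{Q}(\sqrt{D})/\mathbb{Q}$. One must verify that for all sufficiently large $\ell$ the field $\mathbb{Q}(\sqrt{D})$ is linearly disjoint from the $\ell$-th Kummer--cyclotomic tower (here the assumption that $D$ is not a perfect square enters), after which effective Chebotarev under GRH yields a positive density of primes satisfying both conditions.

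The main obstacle is the remaining case where $Q \neq \pm 1$ is a perfect square: then $Q$ is always a QR modulo odd primes, so Somer's criterion fails. Writing $Q = m^{2k}$ with $m$ not a perfect power (possible because $Q \neq \pm 1$, and then $|m|\geq 2$ so Hooley's hypothesis applies to $m$), the Binet formula (when $D$ is QNR) rearranges as
\begin{equation*}
F_n \;=\; m^{k(n-1)} \cdot \frac{\zeta^n - \zeta^{-n}}{\zeta - \zeta^{-1}}, \qquad \zeta := \alpha/m^k \in \mu_{p+1} \subset \mathbb{F}_{p^2}^{\times}.
\end{equation*}
The plan is to combine several Chebotarev/Hooley conditions under GRH: $m$ is a primitive root modulo $p$, $k$ is coprime to $p-1$ (so $m^k$ also generates $\mathbb{F}_p^{\times}$), $\zeta$ generates a large subgroup of $\mu_{p+1}$, and $D$ is a QNR. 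Under these conditions I would show that the product $m^{k(n-1)} (\zeta^n-\zeta^{-n})/(\zeta-\zeta^{-1})$ sweeps through every residue mod $p$. Verifying that these four Frobenius conditions are mutually compatible --- so that their combined density is positive --- is the most delicate step.
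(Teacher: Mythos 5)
Your overall architecture matches the paper's (treat $Q=\pm1$ separately; for $Q\neq\pm1$ split according to whether $Q$ is a perfect square), but two essential steps are missing or wrong. For part (i) with $Q=1$: the antisymmetry you invoke does not yield $|\{F_n \bmod p\}|\le (p+c)/2$. In the only hard case ($\Delta=P^2-4$ a quadratic nonresidue and $\alpha^{(p+1)/2}=\beta^{(p+1)/2}=-1$) one gets $F_{n+r}=-F_n$ with $r=(p+1)/2$, so the image of the sequence is $\{0\}\cup S\cup(-S)$ with $|S|\le (p-1)/2$; such a set can perfectly well be all of $\mathbb{Z}_p$, and nothing in your sketch rules that out. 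The paper closes exactly this gap with a further identity: completeness would force $\{\pm F_1,\dots,\pm F_{r-1}\}=\{1,\dots,p-1\}$, hence $\sum_{i=1}^{r-1}F_i^2\equiv\sum_{i=1}^{r-1}i^2\equiv 0\pmod p$ for $p>3$, while a geometric-series computation from Binet's formula gives $\sum_{i=1}^{r-1}F_i^2=-1/\Delta\not\equiv 0$. Some non-formal input of this kind is unavoidable, and the fallback to Bruckner/Somer/Schinzel does not help here since those results concern only $Q=-1$.

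For part (ii) in the perfect-square case, the assertion that your four conditions make $F_n$ ``sweep through every residue'' is precisely the hard content and is left unproved; it is the paper's Theorem 3, whose proof shows that when $\Delta$ is a nonresidue and $\mathrm{ord}_p(Q)=(p-1)/2$, the element $u=\alpha^{\rho}=F_{\rho+1}\in\mathbb{Z}_p$ is either a primitive root (whence every nonzero column of the period matrix is a reduced residue system) or has order $(p-1)/2$, in which case each nonzero column consists of exactly the quadratic residues or exactly the nonresidues, and a Legendre-symbol identity derived from $F_aF_{\rho-a+1}\equiv QF_{a-1}F_{\rho-a}$ shows the two column types occur equally often. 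Moreover, two of your four conditions are defective as stated: ``$k$ coprime to $p-1$'' is unsatisfiable for every odd $p$ as soon as $k$ is even (e.g.\ $Q=16$), and ``$\zeta=\alpha/m^k$ generates a large subgroup of $\mu_{p+1}$'' is not a Frobenius condition, since $\zeta$ depends on $p$. The paper sidesteps both issues by writing $Q=m^2$ and applying Moree's GRH-conditional theorem on primitive roots in arithmetic progressions to the integer $-m$ (never a square, never $-1$), with the progression rigged so that $\Delta$ is automatically a nonresidue; then $\mathrm{ord}_p(Q)=(p-1)/2$ and Theorem 3 finishes. Your plan for non-square $Q$ (intersecting Hooley's sieve with the quadratic splitting condition from $\mathbb{Q}(\sqrt{\Delta})$) is reasonable in outline and is essentially what Moree's theorem packages, though the paper simply cites Somer for that case.
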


In Section \ref{prelemsection}, we briefly discuss the cases not covered by Theorem \ref{main} i.e., when $PQ=0$ or $P^2-4Q$ is a perfect square. As we stated before, Schinzel \cite{Schinzel} proved part (i) of Theorem \ref{main} for the case $Q=-1$. We prove part (i) for the case $Q=1$ in Section \ref{main}, where we show that $[P,1]$ is complete modulo a prime $p>3$ if and only if $P \equiv \pm 2 \pmod p$ (Theorem \ref{caseq1}). 

Somer \cite{Somer} proved part (ii) of Theorem \ref{main} in the case $Q$ is not a perfect square. In Section \ref{main}, we prove part (ii) in the remaining case where $Q$ is a perfect square not equal to 1 (Lemmas \ref{case1} and \ref{case2}). In addition, we show that under certain conditions (that conditionally hold for infinitely many primes), the sequence $[P,Q]$ has a somewhat uniform distribution modulo $p$ in the sense that it contains every nonzero element the same number of times over the full period (see Theorem \ref{uniform1} for details). 

In Section \ref{completepairs}, we fix a prime and consider the complete generalized Fibonacci sequences with parameters $(P,Q) \in \{1,\ldots, p-1\}^2$. We show that the relative size of the number of such pairs asymptotically can approach but never exceed $\frac{1}{2}$.


\section{Preliminary Results}\label{prelemsection}

Let $P,Q$ be integers and $p>2$ be a prime. The solution $F=[P,Q]$ to the equation \eqref{gfequation} can be written by Binet's formula as
\begin{equation}\label{binet}
F_n=\frac{\alpha^n-\beta^n}{\alpha-\beta}, ~\forall n \geq 0,
\end{equation}
where 
\begin{equation}\label{roots}
\alpha = \frac{P + \sqrt{\Delta}}{2},~\beta = \frac{P - \sqrt{\Delta}}{2},
\end{equation}
are the roots of the characteristic polynomial $x^2-Px+Q$, and $\Delta=P^2-4Q \neq 0$ is its discriminant. One has
\begin{equation}\label{quadformula}
\alpha+\beta=P,~\alpha \beta=Q.
\end{equation}

Given a prime $p$, we will use $\hat{F}\in \{0,\ldots, p-1\}$ to denote the residue 
class of an integer $F$ modulo $p$. The sequence $\{\hat{F_n}\}_{n=0}^\infty$ modulo $p$ is a periodic sequence in $\mathbb{Z}_p$. A period of $[P,Q]$ modulo $p$ is an integer $n$ such that $F_{n+k} \equiv F_k \pmod p$ for all $k\geq 0$. The \emph{Pisano} period $\pi(p)$ is the least period of $[P,Q]$ modulo $p$, and it divides every period of $[P,Q]$.  We call the finite sequence $\{\hat{F_i}: 1\leq i \leq \pi(p)\}$ the \emph{full period} of $[P,Q]$ modulo $p$.
\begin{example}\label{147}
Let $P=1$, $Q=4$, and $p=7$. Then $\pi(p)=24$ and 
$$
\{\hat{F_i}: 1\leq i \leq 24\}=\{1, 1, 4, 0, 5, 5, 6, 0, 4, 4, 2, 0, 6, 6, 3, 0, 2, 2, 1, 0, 3, 3, 5, 0\}
$$
is the full period of $[P,Q]$ modulo $p=7$. 
\end{example}

The difference equation (\ref{gfequation}) has the following generalization which follows from equations \eqref{binet} and \eqref{quadformula} by a straightforward calculation. 
\begin{equation}\label{generalized}
F_n = F_aF_{n+1-a} - QF_{a-1}F_{n-a},~\forall n\geq 1, \forall a\in\{1,\ldots,n\}.
\end{equation}

Regarding the discriminant $\Delta$, we have three cases.
\begin{itemize}
\item[I.] $\Delta$ is a nonzero quadratic residue modulo $p$. In this case, the characteristic polynomial $x^2-Px+Q$ has two distinct roots modulo $p$ that we again denote by $\alpha$ and $\beta$. Binet's formula \ref{binet} still holds, where all the operations involved are carried out in the field $\mathbb{Z}_p$. By Fermat's little theorem, we have
$$F_{n+p-1}= \frac{\alpha^{n+p-1}-\beta^{n+p-1}}{\alpha-\beta}=\frac{\alpha^n-\beta^n}{\alpha-\beta}=F_n,$$
in $\mathbb{Z}_p$. It follows that $\pi(p) \mid (p-1)$, hence the sequence $[P,Q]$ is not complete modulo $p$ in this case.

\item[II.] $\Delta \equiv 0 \pmod p$. In this case, the characteristic polynomial $x^2-Px+Q$ has the repeated root $\alpha=\beta=P/2$. A simple induction shows
\begin{equation}\label{repeatedroot}
F_n \equiv n\left (\frac{P}{2} \right )^{n-1},~\forall n\geq 1.
\end{equation}
If $P \equiv 0 \pmod p$ (hence $Q \equiv 0 \pmod p$), then clearly $[P,Q]$ is not complete modulo $p$. If $P \not\equiv 0 \pmod p$, by letting $n=(p-1)k+1$ for an arbitrary integer $k$ and using Fermat's little theorem again, we conclude that $F_n \equiv -k+1 \pmod p$, hence $[P,Q]$ is complete modulo every prime $p$ in this case. 

\item[III.] $\Delta$ is a quadratic nonresidue modulo $p$. We consider the field extension
$$\mathbb{K}_p=\{a+b \sqrt{\Delta}: a, b\in \mathbb{Z}_p\}.$$
Binet's formula \eqref{binet} still holds with all the operations carried out in the field $\mathbb{K}_p$. Given an element $ \gamma \in \mathbb{K}_p$, the order of $\gamma$, denoted by $ord_p(\gamma)$ is the least positive integer $t$ such that $\gamma^t=1$. In particular, if $\gamma \in \mathbb{Z}_p$, then $ord_p(\gamma) \mid (p-1)$. Generally $ord_p(\gamma) \mid (p^2-1)$, since the multiplicative group $\mathbb{K}_p^*$ has order $p^2-1$. In particular, for $\alpha, \beta$ defined by equations \eqref{roots}, one has $\alpha^{p^2-1}=\beta^{p^2-1}=1$ in $\mathbb{K}_p$, and so $F_{n+p^2-1} \equiv F_n \pmod p$ by Binet's formula \eqref{binet}. Therefore, $\pi(p) \mid (p^2-1)$, which can be improved as shown in the following lemma.
\end{itemize}

\begin{lemma}\label{prelem}
\label{orders}
Suppose that $P,Q$ are integers and $p$ is an odd prime such that $\Delta=P^2-4Q$ is a quadratic nonresidue modulo $p$. Let $\alpha, \beta$ be the roots of $x^2-Px+Q=0$ in $\mathbb{K}_p$ given by \eqref{roots}. Then
\begin{itemize}
\item[(i)] $\alpha^{p+1}=\beta^{p+1}=Q$ in $\mathbb{K}_p$.

\item[(ii)] $\pi(p) \mid {(p+1)ord_p(Q)}$. Moreover, if $ord_p(Q)$ is even, then $\frac{(p+1) ord_p(Q)}{\pi(p)}$ is an odd integer.
\end{itemize}
\end{lemma}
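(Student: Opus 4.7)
For (i), the approach is to apply the Frobenius endomorphism in $\mathbb{K}_p$. Since $\Delta$ is a quadratic nonresidue modulo $p$, Euler's criterion gives $\Delta^{(p-1)/2}=-1$ in $\mathbb{Z}_p$, hence $(\sqrt{\Delta})^{p} = \Delta^{(p-1)/2}\sqrt{\Delta} = -\sqrt{\Delta}$ in $\mathbb{K}_p$. The freshman's dream $(a+b)^p = a^p+b^p$ in characteristic $p$, combined with Fermat's little theorem applied to $P$ and to $2$, then yields
\[
\alpha^p \;=\; \frac{P^p+(\sqrt{\Delta})^p}{2^p} \;=\; \frac{P-\sqrt{\Delta}}{2} \;=\; \beta,
\]
and symmetrically $\beta^p=\alpha$. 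Therefore $\alpha^{p+1} = \alpha\beta = Q$ by \eqref{quadformula}, and likewise $\beta^{p+1}=Q$.

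For (ii), I plan first to identify $\pi(p)$ with the common multiplicative order $\ell := ord_p(\alpha)$ in $\mathbb{K}_p^{*}$. By Binet's formula, $F_n\equiv 0\pmod p$ iff $\alpha^n=\beta^n$ (using $\alpha\neq\beta$ since $\Delta\neq 0$ in $\mathbb{Z}_p$), and under that condition $F_{n+1}\equiv 1\pmod p$ iff the common value of $\alpha^n$ and $\beta^n$ is $1$. Hence $\pi(p) = \mathrm{lcm}(ord_p(\alpha),ord_p(\beta))$. Since $\beta=\alpha^p$ by part (i) and $\gcd(p,\ell)=1$ (as $\ell\mid p^2-1$), one gets $ord_p(\beta)=ord_p(\alpha)=\ell$, and thus $\pi(p)=\ell$. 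Now by part (i), $Q=\alpha^{p+1}$, so setting $d := ord_p(Q)$ gives $\alpha^{(p+1)d}=Q^d=1$, whence $\ell\mid (p+1)d$. Moreover $d = ord_p(\alpha^{p+1}) = \ell/\gcd(\ell,p+1)$, so
\[
\frac{(p+1)\,ord_p(Q)}{\pi(p)} \;=\; \frac{p+1}{\gcd(\ell,p+1)}.
\]

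The remaining parity claim is the main technical point and reduces to a $2$-adic valuation argument. Writing $a = v_2(\ell)$ and $b = v_2(p+1)$, the identity $d = \ell/\gcd(\ell,p+1)$ gives $v_2(d) = a-\min(a,b)$, so $d$ is even precisely when $a>b$. In that case $\gcd(\ell,p+1)$ contains the full $2$-part of $p+1$, and therefore $(p+1)/\gcd(\ell,p+1)$ is odd, as required. I expect the Frobenius step in (i) to be immediate; the main care needed in (ii) is the verification that $\pi(p)=ord_p(\alpha)$ (and not merely a divisor thereof), after which the divisibility and the parity statements follow from elementary bookkeeping inside the cyclic group $\mathbb{K}_p^{*}$.
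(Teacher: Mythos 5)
Your proof is correct. Part (i) is essentially the paper's computation in different packaging: the paper expands $\alpha^{p+1}$ directly by the binomial theorem and Euler's criterion to get $u^2-v^2\Delta=Q$, while you first identify $\alpha^p=\beta$ via Frobenius and then multiply by $\alpha$; these are the same idea. For part (ii), however, you take a genuinely different and more quantitative route. The paper never computes $\pi(p)$: it deduces $\pi(p)\mid (p+1)t$ from $\alpha^{(p+1)t}=\beta^{(p+1)t}=1$, and for the parity claim it observes that when $t=ord_p(Q)$ is even, $\alpha^{(p+1)t/2}=\beta^{(p+1)t/2}=Q^{t/2}=-1$, so Binet gives $F_{(p+1)t/2+k}=-F_k$ for all $k$; hence $\pi(p)\nmid \frac{(p+1)t}{2}$, and a quotient that divides $N$ but not $N/2$ must be odd. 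That argument is shorter and avoids your key intermediate step. You instead prove the exact identification $\pi(p)=ord_p(\alpha)=ord_p(\beta)$ (your verification that a period is characterized by $\alpha^n=\beta^n=1$ is sound, since $F_n=0$ forces $F_{n+1}=\alpha^n$), and then reduce everything to the formula $\frac{(p+1)\,ord_p(Q)}{\pi(p)}=\frac{p+1}{\gcd(\ell,p+1)}$ plus $2$-adic bookkeeping, which checks out. What your approach buys is strictly more information --- an exact value of the quotient rather than just its parity, and the structural fact $\pi(p)=ord_p(\alpha)$, which is in the spirit of what the paper later exploits in Theorem \ref{uniform1} (e.g.\ $\pi(p)=\rho(p-1)$) --- at the cost of the extra verification that $\pi(p)$ equals, and not merely divides, $ord_p(\alpha)$.
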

\begin{proof}
(i) Let $\alpha= u + v\sqrt{\Delta}$, where $u=P/2,v=1/2$. We note that $ (\sqrt{\Delta})^{p+1} ={\Delta}^{(p-1)/2} {\Delta}= -\Delta$ by Euler's criterion. It follows from the binomial theorem modulo $p$ and Fermat's little theorem that
\begin{eqnarray} \nonumber
\alpha^{p+1} &=& u^{p+1} + u^pv\sqrt{\Delta} + uv^p(\sqrt{\Delta})^p + v^{p+1}(\sqrt{\Delta})^{p+1}  \\ \nonumber
&=&  u^2 + uv\sqrt{\Delta} - uv\sqrt{\Delta}- v^2\Delta \\ \nonumber
&=&u^2 - v^2\Delta = \frac{P^2}{4} - \frac{P^2 - 4Q}{4} = Q,
\end{eqnarray}
where all of the operations are carried out in the field $\mathbb{K}_p$. Similarly, $\beta^{p+1}=Q$ in $\mathbb{K}_p$. 

\noindent (ii) Let $t=ord_p(Q)$ such that $Q^{t} \equiv 1 \pmod p$. It follows from part (i) that $\alpha^{(p+1)t}=\beta^{(p+1)t}=Q^t=1$, and so $F_{(p+1)t+k} \equiv F_{k} \pmod p$ for all $k\geq 0$, which implies that $\pi(p) \mid (p+1)t$. 

 If $t$ is even, then $\alpha^{(p+1)t/2} =\beta^{(p+1)t/2}=Q^{t/2} =-1$. It follows again from Binet's formula \eqref{binet} that $F_{(p+1)t/2+k}=-F_{k}$ for all $k\geq 0$. In particular, $\pi(p) \nmid \frac{(p+1)t}{2}$, and therefore, $\frac{(p+1)t}{\pi(p)}$ is odd. 
\end{proof}

In the rest of this section, we briefly discuss the cases that are not covered by Theorem \ref{main} i.e., when $PQ =0$ or $P^2-4Q$ is a perfect square.
\begin{itemize}
\item[i.] $P=0$. Then $F_{2k}=0$ and $F_{2k+1}=(-Q)^{k}$ for all $k\geq 0$. Therefore, $[0,Q]$ is complete modulo $p>2$ if and only if $-Q$ is a primitive root modulo $p$. 
\item[ii.] $Q=0$. Then $F_{n}=P^{n}$ for all $n\geq 1$. Therefore, $[P,0]$ is complete modulo $p>2$ if and only if $P$ is a primitive root modulo $p$. 
\item[iii.] $P^2-4Q=0$. If $P \equiv 0 \pmod p$, the sequence is clearly not complete. If $P \not\equiv 0 \pmod p$, then the sequence is given by \eqref{repeatedroot} which is complete modulo $p$ as shown earlier in Case II. 
\item[iv.] $P^2-4Q$ is a nonzero perfect square. Then by Binet's formula \eqref{binet} and Fermat's little theorem, the sequence $[P,Q]$ is not complete modulo $p$ for all primes $p>2$ as shown earlier in Case I. 
\end{itemize}
Therefore, we have the following corollary of Theorem \ref{main}.
\begin{cor}
(GRH) Let $P,Q$ be arbitrary integers. Then the sequence $[P,Q]$ is complete modulo infinitely many primes if and only if one of the following statements is true.
\begin{itemize}
\item[i.] $P\neq 0$ and $P^2-4Q=0$. 
\item[ii.] $Q=0$ and $P \neq \pm 1$ and $P$ is not a perfect square.
\item[iii.] $Q \neq \pm 1$ and $P^2-4Q$ is not a perfect square. 
\end{itemize}
\end{cor}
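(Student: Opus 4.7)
The corollary is really a bookkeeping statement: it collects everything we know about when $[P,Q]$ is complete modulo infinitely many primes into a single clean characterization. Accordingly, my plan is a case analysis on $(P,Q)$ that invokes Theorem \ref{main} together with the boundary cases i--iv discussed immediately above the corollary. I would prove both directions at once by exhausting the possibilities for $(P,Q)$ and checking in each case whether completeness modulo infinitely many primes coincides with the disjunction (i)$\vee$(ii)$\vee$(iii).

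The preliminary step is to record the form of Hooley's theorem we need: under GRH, an integer $n$ is a primitive root modulo infinitely many primes if and only if $n \notin \{-1,0,1\}$ and $n$ is not a perfect square. This is the translation layer that turns the ``primitive root'' criteria produced by boundary cases i and ii (the $P=0$ and $Q=0$ degenerate sequences) into the concrete arithmetic conditions appearing in (ii) and (iii) of the corollary.

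Next I would run through the cases in order. If $P^2-4Q=0$ and $P\neq 0$, boundary case iii gives completeness modulo every prime not dividing $P$, matching (i); if in addition $P=0$ then $Q=0$ and the sequence is $0,1,0,0,\ldots$, which is not complete modulo any $p>2$, and none of (i)--(iii) holds. If $P^2-4Q$ is a nonzero perfect square, boundary case iv rules out completeness modulo any $p>2$ and again none of (i)--(iii) holds. In the remaining range, $P^2-4Q$ is not a perfect square, and I split on the vanishing of $P$ and $Q$. If $P=0$ (so $Q\neq 0$), boundary case i plus Hooley give completeness modulo infinitely many primes iff $-Q\notin\{-1,1\}$ and $-Q$ is not a perfect square; since $P^2-4Q=-4Q$, this is exactly the instance of (iii) with $P=0$. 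If $Q=0$ and $P\neq 0$, boundary case ii plus Hooley give (ii) directly. If $P,Q$ are both nonzero, Theorem \ref{main} applies and yields completeness modulo infinitely many primes iff $Q\neq\pm 1$, which is the generic case of (iii).

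The only genuine inputs are Theorem \ref{main} and Hooley's theorem; once those are in hand, the proof is mechanical. The one spot that requires a moment's care, and that I would flag as the potential hazard, is the $P=0$ sub-case of (iii): one has to recognize that the hypothesis ``$P^2-4Q$ is not a perfect square'' is equivalent to ``$-Q$ is not a perfect square'' (because $P^2-4Q=-4Q$ and $4$ is square), so that Hooley's perfect-square condition on $-Q$ matches the perfect-square condition on $P^2-4Q$ stated in (iii). Everything else is just checking that the partition of $(P,Q)$-space given by the case analysis lines up exactly with (i)$\cup$(ii)$\cup$(iii).
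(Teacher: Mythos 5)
Your overall strategy (assemble the four boundary cases i--iv with Theorem \ref{main} and Hooley's theorem) is exactly what the paper intends --- it gives no explicit proof of the corollary and leaves precisely this bookkeeping to the reader. However, your case decomposition contains a genuine error: the subcase $Q=0$, $P\neq 0$ lands in the wrong bucket. When $Q=0$ and $P\neq 0$, the discriminant $P^2-4Q=P^2$ is a \emph{nonzero perfect square}, so this subcase is absorbed by your third case, where you assert that the sequence is not complete modulo any $p>2$ and that none of (i)--(iii) holds. Both assertions are false there: for example $[3,0]$ has $\Delta=9$, yet it is complete modulo every prime for which $3$ is a primitive root (e.g.\ $p=7$), and item (ii) of the corollary does hold for it. Consequently your later subcase ``$Q=0$ and $P\neq 0$ with $P^2-4Q$ not a perfect square'' is vacuous --- it never occurs --- so item (ii) is never actually established by your argument, and your third case would ``prove'' the negation of (ii) for all nonsquare $P\neq\pm 1$.

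The underlying issue is that boundary case iv (equivalently Case I) relies on Fermat's little theorem applied to the roots $\alpha,\beta$, which requires $\alpha\beta=Q\not\equiv 0\pmod p$; when $p\mid Q$ one root vanishes and the conclusion $\pi(p)\mid(p-1)$ fails (indeed $[P,0]$ is not even purely periodic, and its value set is $\{0\}\cup\langle P\rangle$). The fix is simple: peel off $Q=0$ (and $P=0$) \emph{before} splitting on whether $P^2-4Q$ is a perfect square, handle $Q=0$, $P\neq 0$ via boundary case ii plus Hooley to get item (ii), and only then apply boundary case iv to the remaining pairs with $Q\neq 0$ (where $p\mid Q$ excludes only finitely many primes and is harmless). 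With that reordering the rest of your analysis --- including the correct observation that for $P=0$ the condition ``$-Q$ is not a perfect square'' is equivalent to ``$P^2-4Q=-4Q$ is not a perfect square'' --- goes through.
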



\section{Proof of the main theorem}\label{main}

First, we consider generalized Fibonacci sequences $[P,1]$. 

\begin{theorem}\label{caseq1}
Let $P$ be an integer and $p>3$ be a prime. Then the sequence $[P,1]$ is complete modulo $p$ if and only if $P \equiv \pm 2 \pmod p$. \end{theorem}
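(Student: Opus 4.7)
The plan is a case analysis driven by the discriminant $\Delta = P^2 - 4 \pmod p$, slotting into the three cases already analyzed in Section \ref{prelemsection}. The condition $P \equiv \pm 2 \pmod p$ is exactly $\Delta \equiv 0 \pmod p$, so the desired biconditional reduces to showing the sequence is complete precisely when $\Delta \equiv 0$, and not complete when $\Delta$ is a QR or non-QR.

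The ``if'' direction is immediate: when $P \equiv \pm 2 \pmod p$ with $p>3$, we have $\Delta \equiv 0$ and $P\not\equiv 0$, so Case II of Section \ref{prelemsection} applies and formula \eqref{repeatedroot} gives $F_{(p-1)k+1} \equiv 1-k \pmod p$, hitting every residue. For the converse, if $\Delta$ is a nonzero quadratic residue, Case I immediately yields $\pi(p)\mid (p-1)$, so at most $p-1$ residues appear and the sequence misses at least one class. The nontrivial subcase is when $\Delta$ is a quadratic non-residue, which is where the main work lies.

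In the non-residue subcase I will use Lemma \ref{prelem} with $Q=1$ (so $ord_p(Q)=1$) to conclude $\alpha^{p+1}=1$ in $\mathbb{K}_p$. Combined with $\alpha\beta=Q=1$, i.e., $\beta = \alpha^{-1}$, Binet's formula becomes
\[
F_n \equiv \frac{\alpha^n - \alpha^{-n}}{\alpha - \alpha^{-1}} \pmod p,
\]
so the image of the sequence is contained in $f(U)$, where $U=\{x\in\mathbb{K}_p^*: x^{p+1}=1\}$ is cyclic of order $p+1$ and $f(x):=(x-x^{-1})/(\alpha-\alpha^{-1})$. I will check that $f$ really lands in $\mathbb{F}_p$ (the Frobenius acts as inversion on $U$, which sends $\alpha-\alpha^{-1}\mapsto -(\alpha-\alpha^{-1})$ and $x-x^{-1}\mapsto -(x-x^{-1})$, so $f$ is Frobenius-invariant). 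The core calculation is then
\[
f(x)=f(y)\iff (x-y)\bigl(1+(xy)^{-1}\bigr)=0 \iff y\in\{x,\,-x^{-1}\},
\]
so $f$ is at most $2$-to-$1$ with fixed points of the involution $\tau(x)=-x^{-1}$ only at the (at most two) solutions of $x^2=-1$ in $U$. Counting orbits gives $|f(U)|\le (p+3)/2$, which is strictly less than $p$ exactly when $p>3$, so the sequence is not complete.

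The main obstacle is the non-residue subcase, specifically the verification that $f$ has small image; once the Frobenius computation and the fiber identity above are in hand the orbit count is mechanical, but the cardinality bound $(p+3)/2<p$ is precisely where the hypothesis $p>3$ enters (and it is needed, since direct inspection shows every $[P,1]$ is complete modulo $2$ and $3$).
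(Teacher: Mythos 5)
Your proof is correct, and its outer structure---identifying $P\equiv\pm2\pmod p$ with $\Delta\equiv 0$ and letting Cases I--II of Section \ref{prelemsection} dispose of the zero and square discriminants---matches the paper's. Where you genuinely diverge is the quadratic nonresidue case. The paper uses Lemma \ref{prelem}(i) to get $\alpha^{(p+1)/2}=\beta^{(p+1)/2}=\pm 1$ and splits into two subcases: if the value is $+1$ the period divides $(p+1)/2$; if it is $-1$ the antisymmetry $F_{(p+1)/2+n}=-F_n$ plus the completeness assumption forces $\{\pm F_1,\dots,\pm F_{(p-1)/2}\}=\{1,\dots,p-1\}$, and the contradiction comes from the closed-form evaluation $\sum_{i=1}^{(p-1)/2}F_i^2=-1/\Delta\neq 0$ against $\sum_j j^2\equiv 0\pmod p$ (this is where the paper uses $p>3$). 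You instead bound the image directly: since $\beta=\alpha^{-1}$ and $\alpha$ lies in the norm-one subgroup $U$ of order $p+1$, every $F_n$ lies in $f(U)$ for $f(x)=(x-x^{-1})/(\alpha-\alpha^{-1})$, and the fiber identity $f(x)=f(y)\Leftrightarrow y\in\{x,-x^{-1}\}$ gives $|f(U)|\le(p+3)/2<p$. Your computations check out (the involution $\tau(x)=-x^{-1}$ does preserve $U$ since $p$ is odd, and has at most two fixed points), the argument is uniform over both of the paper's subcases, avoids the sum-of-squares identity and the completeness-by-contradiction setup, and yields the stronger quantitative conclusion that $[P,1]$ misses at least $(p-3)/2$ residue classes; the paper's route is shorter on the page but certifies only incompleteness. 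Both proofs hinge on the same input, Lemma \ref{prelem}(i) with $Q=1$.
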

\begin{proof}
If $P^2-4$ is zero or a nonzero quadratic residue modulo $p$, the claim follows from our analysis of the three cases (I-III) in Section \ref{prelemsection}. Thus, suppose that $P^2-4$ is a quadratic nonresidue modulo $p$ and $F=[P,1]$ is complete modulo $p$, and we derive a contradiction. It follows from part (i) of Lemma \ref{prelem} that, in this case $\alpha^{p+1}=\beta^{p+1}=1$. It follows that, for $r=(p+1)/2$, we have $\alpha^{r}=\pm 1$ and $\beta^r=\pm 1$. Since $\alpha \beta =Q=1$, one has $\alpha^r \beta^r=(\alpha \beta)^r=1$, hence $\alpha^r=\beta^r=\pm 1$. If $\alpha^r=\beta^r=1$, then $F_{r+n}\equiv F_n \pmod p$ for all $n\geq 0$, hence $\pi(p) \mid r$ and so the sequence is not complete modulo $p$. Therefore, suppose $\alpha^r=\beta^r=-1$. Then, for all $n\geq 0$, 
$$F_{r+n} =\frac{\alpha^{r+n}-\beta^{r+n}}{\alpha-\beta}=\frac{-\alpha^n+\beta^n}{\alpha-\beta}= -F_n,$$ 
in $\mathbb{Z}_p$. In particular, by our assumption that $[P,1]$ is complete, we must have $\{\pm F_1,\ldots, \pm F_{r-1}\}=\{1,\ldots, p-1\}$ modulo $p$. It follows that 
\begin{equation}\label{sumsquares}
F_1^2+\ldots+F_{r-1}^2 \equiv 1^2+\ldots+(r-1)^2 \equiv 0 \pmod p,
\end{equation} for $p>3$. On the other hand,
\begin{eqnarray}\nonumber
\sum_{i=1}^{r-1}F_i^2 &=& \frac{1}{(\alpha-\beta)^2}\sum_{i=1}^{r-1}(\alpha^i-\beta^i)^2 \\ \nonumber
&=&\frac{1}{(\alpha-\beta)^2} \left ( \frac{\alpha^{2r}-\alpha^2}{\alpha^2-1}-2\sum_{i=1}^{r-1}Q^i+\frac{\beta^{2r}-\beta^2}{\beta^2-1} \right ) \\ \nonumber
&=& \frac{1}{\Delta} \left ( -2-2\sum_{i=1}^{r-1} 1  \right )=\frac{-1}{\Delta},
\end{eqnarray}
which contradicts \eqref{sumsquares}.
\end{proof}

Part (i) of Theorem \ref{main} follows from Schinzel's result \cite{Schinzel} for $Q=-1$ and from Theorem \ref{caseq1} for $Q=1$. Note that Theorem \ref{caseq1} implies that if $\Delta=P^2-4 \neq 0$ then $[P,1]$ is not complete modulo any $p>|P|+3$.

The \emph{rank of apparition} $\rho=\rho(p)$ is the least positive integer $n$ such that $F_n \equiv 0 \pmod p$, or equivalently $\alpha^n=\beta^n$ in $\mathbb{K}_p$. If $\alpha^m=\beta^m$ in $\mathbb{K}_p$ for an integer $m$, then $\rho \mid m$. One can arrange the full period $\{\hat{F}_i: 1\leq i \leq \pi(p)\}$ in a $\frac{\pi}{\rho}\times \rho$ matrix, where the $ij$th entry is given by $\hat{F}_{(i-1)\rho+j}$, $1\leq i \leq \pi/\rho$, $1\leq j \leq \rho$; We call this matrix the {\it period matrix} of $[P,Q]$ modulo $p$. Since $F_{t} \equiv 0 \pmod p \Leftrightarrow \rho \mid t$, the last column of the period matrix is a zero column and all other entries are nonzero modulo $p$. It follows that 0 appears exactly $\pi/\rho$ times in the full period. In the next theorem, we show that if $Q$ is a primitive root or the square of a primitive root modulo $p$, then every nonzero element appears the same number of times in the full period. 

\begin{theorem}\label{uniform1}
Let $p$ be an odd prime such that $p \nmid P$ 
and $P^2-4Q$ is a quadratic nonresidue modulo $p$. Suppose that $ord_p(Q)=\frac{p-1}{k}$ for 
some $k\in \{1,2\}$. Then the generalized Fibonacci sequence $[P,Q]$ is complete modulo $p$. More precisely, the following statements hold.
\begin{itemize}
\item[]{a)} If $p \equiv 1 \pmod 4$, or $p \equiv 3 \pmod 4$ and $\rho(p)$ is even, 
then zero appears in the full period $\{\hat{F}_i: 1\leq i \leq \pi(p)\}$ exactly $p-1$ times, 
while each nonzero element of $\mathbb{Z}_p$ appears exactly $\rho -1 $ times. 

\item[]{b)}  If $p \equiv 3 \pmod 4$ and the rank of apparition $\rho$ is odd, then 
\subitem{}i) either zero appears in the full period exactly 
$p-1$ times, while each nonzero element of $\mathbb{Z}_p$ appears $\rho -1 $ times, or
\subitem{}ii) zero appears in the full period exactly $(p-1)/2$ times, while each nonzero element of $\mathbb{Z}_p$ appears 
exactly $(\rho -1)/2$ times. 
\end{itemize}
\end{theorem}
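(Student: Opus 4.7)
The plan is to introduce
\[
\lambda := \alpha^\rho = \beta^\rho \in \mathbb{K}_p,
\]
which is fixed by the Frobenius of $\mathbb{K}_p/\mathbb{Z}_p$ (this swaps $\alpha,\beta$) and so lies in $\mathbb{Z}_p^*$. By Binet's formula, $F_{j+i\rho}\equiv \lambda^i F_j\pmod p$, so each nonzero column ($1\le j\le \rho-1$) of the period matrix is a coset $F_j\langle\lambda\rangle$ of $\langle\lambda\rangle\le\mathbb{Z}_p^*$, traversed once, while column $j=\rho$ is entirely zero. Since $ord_p(\alpha)=\pi$ and $\rho\mid\pi$, one has $ord_p(\lambda)=\pi/\rho$, which is exactly the multiplicity of $0$ in the full period. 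From $\alpha\beta=Q$ I also record the key identity $\lambda^2 = Q^\rho$.

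Next I would determine $ord_p(\lambda)$. Setting $\omega = \alpha/\beta$, so $\rho = ord_p(\omega)$, the identity $\omega^{(p+1)/2} = \alpha^{p+1}/Q^{(p+1)/2} = Q^{-(p-1)/2} = \legendre{Q}{p}$ shows that if $k=2$ (so $Q$ is a QR) then $\rho\mid(p+1)/2$; in particular $\rho$ must be odd whenever $p\equiv 1\pmod 4$. When $\rho$ is even, the relation $\omega^{\rho/2}=-1$ rewrites as the explicit formula $\lambda = -Q^{\rho/2}$; a short Legendre-symbol computation, using that $(p+1)/\rho$ is odd when $k=1$ (which pins down the parity of $\rho/2$ from that of $v_2(p+1)$), shows $\lambda$ is a QNR in every even-$\rho$ subcase of (a), so $ord_p(\lambda)=p-1$. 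When $\rho$ is odd, $\gcd(m,\rho)\mid \gcd(p+1,p-1)=2$ forces $\gcd(m,\rho)=1$, whence $ord_p(Q^\rho)=m=(p-1)/k$; comparing $ord_p(\lambda)\mid p-1$ with $ord_p(\lambda^2)=m$ via the parity of $m$ eliminates $ord_p(\lambda)=(p-1)/2$ in every subcase except $p\equiv 3\pmod 4$, $k=2$, $\rho$ odd, where both values remain possible, producing exactly the dichotomy in (b).

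The counting is then immediate. If $ord_p(\lambda)=p-1$ (cases (a) and (b)(i)), then $\langle\lambda\rangle=\mathbb{Z}_p^*$ and each of the $\rho-1$ nonzero columns permutes $\mathbb{Z}_p^*$, so every nonzero residue appears exactly $\rho-1$ times. In case (b)(ii), where $ord_p(\lambda)=(p-1)/2$ and $\langle\lambda\rangle$ is the subgroup of quadratic residues, I would derive from Binet (or from \eqref{generalized} with $n=\rho$ and $a=j$) the symmetry
\[
F_{\rho-j} \equiv -\lambda\, Q^{-j} F_j \pmod p, \qquad 1\le j\le \rho-1.
\]
Since $p\equiv 3\pmod 4$ makes $-1$ a QNR, $\lambda$ is a QR (being a generator of $\langle\lambda\rangle$), and $Q$ is a QR (since $k=2$), the ratio $F_{\rho-j}/F_j$ is a QNR. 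The involution $j\leftrightarrow \rho-j$ on $\{1,\ldots,\rho-1\}$ is fixed-point-free because $\rho$ is odd, so it bijects QR columns with QNR columns; each of the two cosets contains exactly $(\rho-1)/2$ of the $F_j$, giving each nonzero residue exactly $(\rho-1)/2$ times.

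The delicate step is the case analysis that fixes $ord_p(\lambda)$; forcing $ord_p(\lambda)=p-1$ in the even-$\rho$ subcases of (a) rests on the explicit formula $\lambda = -Q^{\rho/2}$ together with Lemma \ref{prelem}(ii) (via the parity of $(p+1)/\rho$ when $k=1$) and the congruence class of $p$ modulo $4$. The symmetry used in (b)(ii) is a routine consequence of Binet, but its sign and exponent of $Q$ are precisely what makes the hypothesis $p\equiv 3\pmod 4$ essential in that case.
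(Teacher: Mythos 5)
Your proposal is correct and follows essentially the same route as the paper's proof: your $\lambda=\alpha^\rho$ is the paper's $u=F_{\rho+1}$, the period matrix decomposes into the columns $F_j\langle\lambda\rangle$ exactly as in the paper, and the case analysis pinning down $ord_p(\lambda)$ (a primitive root in case (a), the two alternatives in case (b)) is the same. The only cosmetic differences are that you determine $ord_p(\lambda)$ via the explicit identities $\lambda^2=Q^\rho$ and $\lambda=-Q^{\rho/2}$ rather than the paper's argument about whether $\alpha^{\rho/2}$ lies in $\mathbb{Z}_p$, and in case (b)(ii) you obtain the quadratic-character pairing of columns from the Binet symmetry $F_{\rho-j}\equiv-\lambda Q^{-j}F_j$ instead of from the identity \eqref{generalized}.
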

\begin{proof}
a) Let $u=\alpha^\rho=F_{\rho+1} \in \mathbb{Z}_p$. We first show that $u$ is a primitive root modulo $p$. 
Since $F_{p+1}=\alpha^{p+1}-\beta^{p+1}=0$ by Lemma \ref{prelem}, we have $\rho \mid (p+1)$. Let $l=(p+1)/\rho$, and so $u^l=Q$ in $\mathbb{K}_p$. If $k=1$, then $Q$ is a primitive root modulo $p$, hence $u$ is a primitive root modulo $p$. Thus, suppose that $k=2$, hence $Q$ is a quadratic residue modulo $p$. It follows from $u^l=Q$ that $ord_p (u) =p-1$ or $\frac{p-1}{2}$. Since $(\alpha^{(p+1)/2})^2=Q$, and $Q$ is a quadratic residue, we must have $\alpha^{(p+1)/2} \in \mathbb{Z}_p \subseteq \mathbb{K}_p$. Similarly $\beta^{(p+1)/2} \in \mathbb{Z}_p$. Therefore, either $\alpha^{(p+1)/2}=\beta^{(p+1)/2}$ or $\alpha^{(p+1)/2}=-\beta^{(p+1)/2}$. If $\alpha^{(p+1)/2}=-\beta^{(p+1)/2}$, we have $F_{(p+3)/2}=\alpha^{(p+1)/2}P/(\alpha-\beta) \in \mathbb{Z}_p$, which is a contradiction, since $P \not\equiv 0$ modulo $p$ and $\alpha-\beta=\sqrt{\Delta} \notin \mathbb{Z}_p$. 
Therefore, we have $\alpha^{(p+1)/2}= \beta^{(p+1)/2}$, and so $\rho \mid \frac{p+1}{2}$. There are two cases.
\\
\\
Case 1. If $p \equiv 1 \pmod 4$, then it follows from $\rho \mid \frac{p+1}{2}$ that $\rho$ is odd and $l$ is even. But then, $(u^{(p-1)/2})^{l/2}=Q^{(p-1)/4}=-1$, which implies that $ord_p (u) \neq \frac{p-1}{2}$, and so $ord_p(u)=p-1$ in this case. 
\\
\\
Case 2. If $p \equiv 3 \pmod 4$, then $\rho$ is even by the assumption. If $ord_p(u)=\frac{p-1}{2}$, then $u$ is a quadratic residue and so $\alpha^{\rho/2}, \beta^{\rho/2} \in \mathbb{Z}_p$, since $(\alpha^{\rho/2})^2=(\beta^{\rho/2})^2=u$. It follows that $\alpha^{\rho/2}=\pm \beta^{\rho/2}$. However, by definition of $\rho$, we cannot have $\alpha^{\rho/2}=\beta^{\rho/2}$. Therefore, $\alpha^{\rho/2}=-\beta^{\rho/2}$, which implies that $F_{\rho/2+1}=\alpha^{\rho/2}P/(\alpha-\beta)$ which contradicts $\alpha-\beta=\sqrt{\Delta} \notin \mathbb{Z}_p$. It follows that $ord_p(u)=p-1$ in this case as well.

We have so far shown that $u=\alpha^\rho$ is a primitive root modulo $p$. In particular, the numbers $F_{s\rho+1}=\alpha^{s\rho}$, $1\leq s \leq p-1$ form a reduced residue system modulo $p$ i.e., they contain every nonzero congruence class modulo $p$. 
Since for each $1\leq s \leq p-1$ and $0\leq t <\rho$, we have
$$F_{s\rho+t}=\frac{\alpha^{s\rho+t}-\beta^{s\rho+t}}{\alpha-\beta}=\alpha^{s\rho}F_t=F_{s\rho+1}F_t,$$
and $F_t \neq 0$ for $1\leq t <\rho$, we conclude that the numbers $F_{s\rho+t}$, $1\leq s \leq p-1$ 
form a reduced residue system modulo $p$ for each fixed $0 < t <\rho$. It follows that zero 
appears exactly $p-1$ times among the numbers $\hat{F}_1,\ldots, \hat{F}_{\pi(p)}$, 
and each $k\in \{1,\ldots, p-1\}$ 
appears exactly $\rho-1$ times in the full period. Moreover, $\pi(p)=\rho(p-1)$ i.e., 
each nonzero element of $\mathbb{Z}_p$ appears exactly $\rho-1=\frac{\pi(p)-p+1}{p-1}$ times. This completes the proof of part (a).
\\
\\
\emph{Proof of part (b).} If $ord_p(u)=p-1$, then again $u$ is a primitive root modulo $p$ and the claim follows as in the proof of part (a). Thus, suppose that  
$ord_p(Q)=ord_p(u)=\frac{p-1}{2}$. Then $\{F_{s\rho+1}=u^{s}: 1\leq s \leq( p-1)/2\}$ contains exactly all quadratic residues modulo $p$. 
Since $F_{a+ s\rho} = u^sF_a$, we have $\legendre{F_a}{p} = \legendre{F_{a+s\rho}}{p}$. Therefore, the last column of the period matrix is a zero column; moreover, every other column of the period matrix lists either exactly the set of all nonzero quadratic residues or the set of all quadratic nonresidues, depending on whether the first entry in the column is a quadratic residue or a quadratic nonresidue. Equation \eqref{generalized} implies that $F_a F_{\rho-(a-1)} \equiv Q F_{a-1}F_{\rho-a} \pmod p$. Since $Q$ is a quadratic residue modulo $p$, it follows that
$$
\legendre{F_a}{p}\legendre{F_{\rho-a}}{p}=\legendre{F_{a-1}}{p}\legendre{F_{\rho-(a-1)}}{p}, ~ \forall 
a\in\{2,\ldots,\rho-1\}.
$$
Therefore, $\legendre{F_a}{p}\legendre{F_{\rho-a}}{p}$ is independent of $a$, and so
$$\legendre{F_a}{p}\legendre{F_{\rho-a}}{p}=\legendre{F_{\rho-1}}{p}\legendre{F_1}{p}=-1,$$
since $u =\alpha^\rho=F_{\rho+1}=PF_\rho-QF_{\rho-1}=-QF_{\rho-1}$ which implies that $\legendre{F_{\rho-1}}{p}=-1$. 
We conclude that for each $a\in \{1,\ldots, \rho-1\}$ exactly one of the numbers $F_a$ or $F_{\rho-a}$ is a nonzero quadratic residue modulo $p$. So half of the $(\rho-1)$ nonzero columns of the period matrix list exactly the set of all nonzero quadratic residues and the other half list exactly the set of quadratic nonresidues. It follows that each $k\in\{1,2,\ldots, p-1\}$ appears $(\rho - 1)/2$ times in the full period.
\end{proof}

To illustrate the statements of Theorem \ref{uniform1}, first let $(P,Q)=(1,4)$ 
and $p=13$. In this case $ord_{13}(4)=6$, $\pi(13)=84$, and $\rho(13)=7$. The 
period matrix is given by  
$$
\begin{pmatrix}
1 & 1 & 10 & 6 & 5 & 7 & 0\\ 
11 & 11 & 6 & 1 & 3 & 12 & 0\\ 
4 & 4 & 1 & 11 & 7 & 2 & 0\\
5 & 5 & 11 & 4 & 12 & 9 & 0\\
3 & 3 & 4 & 5 & 2 & 8 & 0\\ 
7 & 7 & 5 & 3 & 9 & 10 & 0\\ 
12 & 12 & 3 & 7 & 8 & 6 & 0\\ 
2 & 2 & 7 & 12 & 10 & 1 & 0\\ 
9 & 9 & 12 & 2 & 6 & 11 & 0\\ 
8 & 8 & 2 & 9 & 1 & 4 & 0\\ 
10 & 10 & 9 & 8 & 11 & 5 & 0\\ 
6 & 6 & 8 & 10 & 4 & 3 & 0\\
\end{pmatrix}=\begin{pmatrix}
1 \\ 
11 \\ 
4 \\
5 \\
3 \\ 
7 \\ 
12\\ 
2 \\ 
9 \\ 
8 \\ 
10 \\ 
6 \\
\end{pmatrix} \begin{pmatrix}
1 & 1 & 10 & 6 & 5 & 7 & 0
\end{pmatrix} \pmod{13}
$$
For part (b) of the Theorem \ref{uniform1}, let $(P,Q) =  (12,5)$ and $p = 19$. 
Here $\rho(19) = 5$, $ord_{19}(5) = 9$, $\pi(19) = 45$, and 
$u = \alpha^5 = 103566 + 18601\sqrt{31} \equiv 16\pmod{19}$. 
The period matrix is given by
$$
\begin{pmatrix}
1 & 12 & 6 & 12 & 0\\ 
16 & 2 & 1 & 2 & 0\\
9 & 13 & 16 & 13 & 0\\
11 & 18 & 9 &18 & 0\\
5 & 3 & 11 & 3 & 0\\
4 & 10 & 5 & 10 & 0\\
7 & 8 & 4 & 8 & 0\\
17 & 14 & 7 & 14 & 0\\
6 & 15 & 17 & 15 & 0\\
\end{pmatrix} =\begin{pmatrix}
1 \\ 
16 \\ 
9 \\
11 \\
5 \\ 
4 \\ 
7 \\ 
17 \\ 
6 \\ 
\end{pmatrix} \begin{pmatrix}
1 & 12 & 6 &12 & 0 
\end{pmatrix} \pmod{19}
$$
The first and the third columns of this matrix list exactly all of nonzero quadratic residues modulo 19, while the 
second and the fourth columns list exactly all of the quadratic nonresidues modulo 19.

As stated in the introduction, if $Q \neq \pm 1$ and $Q$ is not a perfect square, and if Artin's conjecture holds, then by Somer's result \cite{Somer} the sequence $[P,Q]$ is complete modulo infinitely many primes. To complete the proof of Theorem \ref{main}, we next consider the case where $Q$ is a perfect square not equal to 1. 

\begin{lemma}\label{case1}
Suppose that $Q=m^2$ for an integer $m>1$. Suppose that $\pm(P^2-4Q)$ are not perfect squares. Then $[P,Q]$ is complete modulo infinitely many primes. 
\end{lemma}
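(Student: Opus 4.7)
My strategy is to apply Theorem \ref{uniform1} with $k=2$, so I need to exhibit infinitely many odd primes $p\nmid P$ for which $\Delta=P^2-4Q$ is a quadratic nonresidue modulo $p$ and $\mathrm{ord}_p(Q)=(p-1)/2$.

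First I would note that $-Q=-m^2$ is negative and different from $\pm 1$ (since $m>1$), so it is not a perfect square. Hooley's conditional proof of Artin's conjecture then produces, under GRH, a positive density of primes $p$ for which $-Q$ is a primitive root modulo $p$. For any such $p$ coprime to $m$, a short computation gives the order information required by Theorem \ref{uniform1}: since $(-Q)^{(p-1)/2}=-1$ and $Q^{(p-1)/2}=1$ (the latter because $Q=m^2$ is a nonzero QR), the identity
\[
Q^{(p-1)/2}=(-1)^{(p-1)/2}(-Q)^{(p-1)/2}=-(-1)^{(p-1)/2}
\]
forces $p\equiv 3\pmod 4$. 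Then $(p-1)/2$ is odd, and the fact that $Q^2=(-Q)^2$ has order $(p-1)/2$ combined with $\mathrm{ord}_p(Q)\mid (p-1)/2$ forces $\mathrm{ord}_p(Q)=(p-1)/2$.

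The main remaining step is to arrange simultaneously that $\Delta$ is a QNR modulo $p$. This is a quadratic Chebotarev condition (that $p$ be inert in $K:=\mathbb{Q}(\sqrt{\Delta})$) which I would combine with the Artin condition on $-Q$ by extending Hooley's inclusion--exclusion over the Kummer--cyclotomic fields $L_\ell=\mathbb{Q}(\zeta_\ell,(-Q)^{1/\ell})$ so as to include an extra splitting condition in $K$. The combined conditional density is positive provided $K$ is linearly disjoint from the $L_\ell$ in the relevant sense. For $\ell=2$ one has $L_2=\mathbb{Q}(\sqrt{-Q})$, and the identity
\[
(-Q)\cdot\Delta=-m^2\Delta
\]
shows that $K=\mathbb{Q}(\sqrt{-Q})$ if and only if $-\Delta$ is a rational square, which is excluded by hypothesis. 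For odd $\ell$ the unique quadratic subfield of $L_\ell$ lies in $\mathbb{Q}(\zeta_\ell)$, and such a subfield can coincide with $K$ for at most one $\ell$; finitely many such terms do not destroy positivity of the inclusion--exclusion sum. Thus infinitely many primes $p$ satisfy both the Artin condition on $-Q$ and the QNR condition on $\Delta$.

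Finally, for any such $p$ large enough that $p\nmid 2Pm$, all hypotheses of Theorem \ref{uniform1} hold with $k=2$, so $[P,Q]$ is complete modulo $p$, giving the lemma. The main obstacle is the intermediate step: carefully combining Hooley's conditional argument with the auxiliary Chebotarev condition on $\Delta$. The role of the hypothesis ``$-\Delta$ is not a perfect square'' is exactly to guarantee the linear disjointness at $\ell=2$, which is the only potentially trivial collapse between $\mathbb{Q}(\sqrt{\Delta})$ and the Kummer tower attached to $-Q$.
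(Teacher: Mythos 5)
Your reduction is the same as the paper's: both arguments aim to produce infinitely many primes $p$ with $\mathrm{ord}_p(Q)=\frac{p-1}{2}$ and $\legendre{\Delta}{p}=-1$, and then invoke Theorem \ref{uniform1}. (The paper asks for $-m$ rather than $-m^2$ to be a primitive root, but your order computation for $-Q=-m^2$ is correct and serves the same purpose.) The difference --- and the gap --- lies in how those primes are produced. The paper does not extend Hooley's sieve by hand: it writes $\Delta=\pm X^2p_1\cdots p_l$, uses quadratic reciprocity and the Chinese remainder theorem to build an explicit arithmetic progression modulo $8p_1\cdots p_l$ on which $\legendre{\Delta}{p}=-1$ holds automatically, and then cites Moree's GRH-conditional theorem on primes in a prescribed arithmetic progression having a prescribed primitive root. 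That citation supplies exactly the step you flag as ``the main obstacle'' and leave open.

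Moreover, your positivity argument for the combined density is not correct as stated. In the inclusion--exclusion $\delta=\sum_n\mu(n)\delta_n$, a term degenerates whenever $K=\mathbb{Q}(\sqrt{\Delta})\subseteq L_n$; if $K$ happens to coincide with the quadratic subfield $\mathbb{Q}(\sqrt{\ell_0^*})$ of $\mathbb{Q}(\zeta_{\ell_0})$ for some odd prime $\ell_0$, then $K\subseteq L_n$ for \emph{every} squarefree $n$ divisible by $\ell_0$, so infinitely many terms of the sum are affected, not ``finitely many.'' One must evaluate the resulting sub-sum (an Euler-factor-type correction) and verify that it does not cancel the main term; this is precisely the content of Moree's theorem, which comes with explicit nonvanishing criteria. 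Your identification of the $\ell=2$ entanglement is correct and is indeed where the hypothesis that $-\Delta$ is not a square enters (the degenerate case $\Delta=-X^2$ being handled separately in Lemma \ref{case2}); but to complete the proof you should either carry out the full density computation or, as the paper does, convert the quadratic-nonresidue condition into a congruence condition and cite the known result.
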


\begin{proof}
Let $p_1,\ldots, p_l$ be primes such that $\Delta=P^2-4Q=\pm X^2p_1\ldots p_l$, where $X$ is an integer and $p_1 <p_i$ for all $i>1$. We choose a number $t_1$ such that $\legendre{t_1}{p_1}=-1$ if $p_1$ is odd, and let $t_1=5$ if $p_1=2$. By the Chinese remainder theorem, there exists an integer $T$ such that $T \equiv 5 \pmod 8$ and $T \equiv t_1 \pmod {p_1}$ and $T\equiv 1 \pmod{p_i}$ for all $i>1$. For every prime $p$ in the arithmetic progression ${\mathcal A}=\{8kp_1\cdots p_l+T: k\geq 0\}$, we have
$$\legendre{\Delta}{p}=\legendre{p_1}{p}=\begin{cases} \legendre{p}{p_1}=\legendre{t_1}{p_1}=-1 & \mbox{if $p_1$ is odd}; \\
\legendre{2}{p}=-1& \mbox{if $p_1=2$}.
\end{cases}$$
By \cite[Theorem 4]{Moree}, there exist infinitely many primes $p$ in the arithmetic progression $\mathcal A$ for which $ord_p(-m)=p-1$. It follows that for such $p$ we have that $ord_p(Q)=\frac{p-1}{2}$ and $\Delta$ is a quadratic nonresidue modulo $p$. By Theorem \ref{uniform1}, the sequence $[P,Q]$ is complete modulo each such $p>P$, and the claim follows.
\end{proof}

 \begin{lemma}\label{case2}
Suppose that $Q=m^2$ with $m>1$ and $P^2-4Q=-X^2$, where $X$ is a nonzero integer. Then $[P,Q]$ is complete modulo infinitely many primes. 
 \end{lemma}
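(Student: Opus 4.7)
The strategy parallels that of Lemma \ref{case1}. Since $\Delta = -X^2$, for an odd prime $p$ with $p \nmid X$ we have $\legendre{\Delta}{p} = \legendre{-1}{p}$, which equals $-1$ precisely when $p \equiv 3 \pmod 4$. The plan is therefore to construct an arithmetic progression $\mathcal{A}$ forcing $p \equiv 3 \pmod 4$ and $p \nmid 2PXm$, then invoke Moree's Theorem 4 to obtain infinitely many primes $p \in \mathcal{A}$ with $ord_p(-m) = p-1$, and finally apply Theorem \ref{uniform1}.

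Concretely, I would let $p_1, \ldots, p_l$ be the odd primes dividing $PXm$. By the Chinese remainder theorem, choose an integer $T$ with $T \equiv 3 \pmod 4$ and $T \equiv 1 \pmod{p_i}$ for each $i$, and set $N = 4 p_1 \cdots p_l$, so that $\gcd(T, N) = 1$. Every prime $p$ in the arithmetic progression $\mathcal{A} = \{Nk + T : k \geq 0\}$ then satisfies $p \equiv 3 \pmod 4$ together with $p \nmid 2PXm$, whence $\Delta = -X^2$ is a nonzero quadratic nonresidue modulo $p$. By \cite[Theorem 4]{Moree}, there are infinitely many primes $p \in \mathcal{A}$ for which $ord_p(-m) = p - 1$. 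For any such $p$, since $p$ is odd, $ord_p(Q) = ord_p((-m)^2) = (p-1)/2$. Combined with $p \nmid P$, the hypotheses of Theorem \ref{uniform1} are met with $k = 2$, which yields that $[P,Q]$ is complete modulo $p$, finishing the proof.

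The main potential obstacle is ensuring Moree's theorem actually supplies infinitely many primes in $\mathcal{A}$ with $-m$ as a primitive root, i.e., that no congruence obstruction interferes. Since $m \geq 2$, the integer $-m$ is neither $-1$ nor a perfect square, so it satisfies Artin's hypotheses; and the modulus $N$ is assembled only from $4$ and the primes dividing $PXm$, so the progression is generic enough that no structural incompatibility with $-m$ arises, exactly as was used in the proof of Lemma \ref{case1}. Once that input is secured, the rest of the argument reduces entirely to the order computation and a direct citation of Theorem \ref{uniform1}.
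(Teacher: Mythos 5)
Your overall strategy is the paper's: force $p \equiv 3 \pmod 4$ so that $\Delta = -X^2$ is a quadratic nonresidue, obtain infinitely many such $p$ with $ord_p(-m)=p-1$ from Moree's theorem, deduce $ord_p(Q)=(p-1)/2$, and apply Theorem \ref{uniform1}. However, the auxiliary congruences $T \equiv 1 \pmod{p_i}$ for the odd primes $p_i \mid PXm$ --- which the paper does not impose --- introduce exactly the failure mode you flagged and then dismissed. Once $p$ is fixed modulo $4$ and modulo every odd prime dividing $m$, quadratic reciprocity can force $\legendre{-m}{p}=+1$ for \emph{every} prime $p$ in your progression, in which case $-m$ is a quadratic residue, hence never a primitive root, modulo any $p\in\mathcal{A}$, and Moree's theorem has nothing to offer. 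Concretely, take $(P,Q)=(18,225)$, so $m=15$, $X=24$, $\Delta=-576$: for every prime $p$ with $p\equiv 3\pmod 4$, $p\equiv 1\pmod 3$, $p\equiv 1\pmod 5$ one computes $\legendre{-15}{p}=\legendre{-1}{p}\legendre{3}{p}\legendre{5}{p}=(-1)(-1)(+1)=+1$, so your $\mathcal{A}$ contains no prime with $-15$ as a primitive root. This happens whenever $m$ is odd (or contains $2$ to an even power) and has an odd number of prime factors $q\equiv 3\pmod 4$ occurring to odd multiplicity, so the obstruction is not exotic.

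The repair is to do less: the paper takes $\mathcal{A}=\{4k+3: k\geq 0\}$ with no further conditions. The finitely many primes dividing $PXm$ (or at most $P$) are simply discarded at the end, since Moree's theorem supplies infinitely many primes; there is no need to exclude them by congruence conditions. On the bare progression $p\equiv 3\pmod 4$ the value $\legendre{-m}{p}=-\legendre{m}{p}$ is not forced to be $+1$ (it is identically $-1$ when $m$ is a perfect square and genuinely varies otherwise), so no obstruction arises. With that change, the remainder of your argument --- the computation $ord_p(Q)=ord_p((-m)^2)=(p-1)/2$ and the appeal to Theorem \ref{uniform1} --- coincides with the paper's proof.
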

 \begin{proof}
Let $\mathcal A=\{4k+3: k\geq 0\}$. By \cite[Theorem 4]{Moree}, there exist infinitely many primes $p$ in the arithmetic progression $\mathcal A$ for which $ord_p(-m)=p-1$. It follows that $ord_p(Q)=\frac{p-1}{2}$ for such $p$ and $\Delta=-X^2$ is a quadratic nonresidue modulo $p$, since $p \equiv 3 \pmod 4$. By Theorem \ref{uniform1}, the sequence $[P,Q]$ is complete modulo each such $p>P$, and the claim follows.
  \end{proof}
  
 Part (ii) of Theorem \ref{main} follows from Lemmas \ref{case1} and \ref{case2}. Moree's result \cite[Theorem 4]{Moree} on primitive roots modulo primes in an arithmetic progression is a conditional result, and so our proof of part (ii) of Theorem \ref{main} is conditional to GRH. 
 

 \section{The relative size of complete pairs}\label{completepairs}
 
 In Theorem \ref{main}, we fix the parameters $P,Q$ and consider the primes $p$ modulo which the generalized Fibonacci sequence $[P,Q]$ is complete. In this section, we fix a prime $p$ and consider parameters $P,Q$ for which the sequence $[P,Q]$ is complete. Let $\Lambda_p$ denote the set of pairs $(P,Q) \in \{1,\ldots, p-1\}^2$ such that $[P,Q]$ is complete modulo $p$. We are particularly interested in the relative size of $\Lambda_p$ in the set $\{1,\ldots, p-1\}^2$.  
 
 In what follows, let ${\mathcal A}_p$ denote the set of primitive roots modulo $p$ in $\mathbb{Z}_p$. 
 Let ${\mathcal B}_p$ denote the set of $Q \in \{1,\ldots, p-1\}$ such that $ord_p(Q)=\frac{p-1}{k}$, where $k\in \{1,2\}$. In particular ${\mathcal A}_p \subseteq {\mathcal B}_p$. Finally, for each $Q \in \{1,\ldots, p-1\}$, let ${\mathcal C}_Q$ denote the set of $P \in \{1,\ldots, p-1\}$ such that $P^2-4Q$ is a quadratic nonresidue.

 \begin{lemma}\label{boundbp}
For every odd prime $p$, one has ${|\Lambda_p|} \geq \frac{1}{2} (p-3)|{\mathcal B}_p |.$  
 \end{lemma}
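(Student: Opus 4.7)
My plan is to reduce the problem to a counting/character-sum estimate via Theorem \ref{uniform1}. The point of that theorem is exactly that whenever $Q \in \mathcal{B}_p$ and $P \in \mathcal{C}_Q$ (with $p \nmid P$, which is automatic since $P \in \{1,\ldots,p-1\}$), the sequence $[P,Q]$ is complete modulo $p$. Hence every such pair lies in $\Lambda_p$, giving the disjoint-union bound
\begin{equation*}
|\Lambda_p| \;\geq\; \sum_{Q \in \mathcal{B}_p} |\mathcal{C}_Q|.
\end{equation*}
So the lemma will follow once I establish the uniform lower bound $|\mathcal{C}_Q| \geq (p-3)/2$ for every $Q \in \{1,\ldots,p-1\}$.

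To bound $|\mathcal{C}_Q|$, I would evaluate a Jacobsthal-type character sum. Let $\chi$ be the Legendre symbol modulo $p$, with $\chi(0)=0$, and let $N_+$, $N_-$, $N_0$ denote the number of $P \in \{0,1,\ldots,p-1\}$ for which $P^2-4Q$ is a nonzero quadratic residue, a quadratic nonresidue, or zero, respectively. The polynomial $X^2 - 4Q$ has discriminant $16Q \not\equiv 0 \pmod p$, so the classical formula
\begin{equation*}
\sum_{P=0}^{p-1}\chi(P^2-4Q) \;=\; -\chi(1) \;=\; -1
\end{equation*}
applies, yielding $N_+ - N_- = -1$. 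A brief case split on whether $Q$ is a quadratic residue (giving $N_0=2$) or a quadratic nonresidue (giving $N_0=0$) then produces $N_- = (p-1)/2$ or $N_- = (p+1)/2$, respectively, so in either case $N_- \geq (p-1)/2$.

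Finally, since $\mathcal{C}_Q$ excludes $P=0$ from the set counted by $N_-$, I would observe that $|\mathcal{C}_Q| = N_-$ if $-4Q$ is not a nonzero QNR, and $|\mathcal{C}_Q| = N_- - 1$ otherwise. Either way, $|\mathcal{C}_Q| \geq N_- - 1 \geq (p-3)/2$. Summing over $Q \in \mathcal{B}_p$ gives the claim. The main obstacle, such as it is, lies in the Jacobsthal sum evaluation, but this is a standard quadratic character computation; the rest is bookkeeping, so I expect the proof to be short.
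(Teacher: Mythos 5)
Your proposal is correct, and it follows the paper's overall decomposition exactly: reduce to $|\Lambda_p| \geq \sum_{Q \in \mathcal{B}_p}|\mathcal{C}_Q|$ via Theorem \ref{uniform1}, then prove the uniform bound $|\mathcal{C}_Q| \geq (p-3)/2$. Where you diverge is in how that bound is obtained. The paper observes that $P \mapsto P^2/(4Q)$ is a two-to-one map from $\mathcal{C}_Q$ onto the set of $x$ with $x$ a quadratic residue and $x-1$ a nonresidue (or the analogous set when $Q$ is a nonresidue), and then invokes Aladov's classical count of consecutive residue/nonresidue pairs to get $|\mathcal{C}_Q| \geq 2 \cdot \frac{p-3}{4}$. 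You instead evaluate the character sum $\sum_{P=0}^{p-1}\legendre{P^2-4Q}{p} = -1$ directly (the discriminant $16Q$ is nonzero, so the standard formula applies), split on whether $Q$ is a residue to determine $N_0 \in \{0,2\}$, and deduce $N_- \geq (p-1)/2$, losing at most one element for $P=0$. Your computation is correct and self-contained, and it actually yields the exact value of $|\mathcal{C}_Q|$ up to the $P=0$ adjustment, which is marginally sharper than what the paper records; the paper's route buys brevity by citing a known result, while yours replaces the citation with a short, standard quadratic character calculation. Either way the final summation over $\mathcal{B}_p$ is identical.
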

 
 \begin{proof}
  Let $\mathcal X$ be the set of $x\in \{1,\ldots, p-1\}$ such that $x$ is a quadratic residue and $x-1$ is a quadratic nonresidue, and let $\mathcal Y$ be the set of $y\in \{1,\ldots, p-1\}$ such that $y-1$ is a quadratic residue and $y$ is a quadratic nonresidue modulo $p$. It follows from a result of Aladov \cite[Theorem 1]{Aladov} that 
  \begin{equation}\label{xyineq}
|\mathcal X|, |\mathcal Y| \geq \frac{p-3}{4}.
\end{equation}
If $Q$ is a quadratic residue modulo $p$, then the map $P \mapsto P^2/(4Q)$ is a two-to-one map from ${\mathcal C}_Q$ onto $\mathcal X$. If $Q$ is a quadratic nonresidue modulo $p$, then the map $P \mapsto P^2/(4Q)$ is a two-to-one map from ${\mathcal C}_Q $ onto $\mathcal Y$. In either case, it follows from Inequalities \eqref{xyineq} that
$$|{\mathcal C}_Q|\geq \frac{p-3}{2}.$$
By Theorem \ref{uniform1}, if $Q\in {\mathcal B}_p$ and $P \in {\mathcal C}_Q$, then $[P,Q]$ is complete modulo $p$. Therefore, we have
$${|\Lambda_p|} \geq \sum_{Q \in {\mathcal B}_p} |{\mathcal C}_Q| \geq \frac{1}{2}(p-3)|{\mathcal B}_p |.$$
This completes the proof of Lemma \ref{boundbp}.
 \end{proof}

Now, we are ready to prove the main theorem of this section. 
 
 \begin{theorem}\label{limsupap}
 $$\limsup_{p \rightarrow \infty} \frac{|\Lambda_p|}{p^2}=\frac{1}{2}.$$
 \end{theorem}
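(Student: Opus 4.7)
The plan is to prove matching upper and lower bounds of $1/2$ for $|\Lambda_p|/p^2$.

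\emph{Upper bound.} I will show $|\Lambda_p| \leq (p-1)(p+1)/2$ for every odd prime $p$, which already gives $\limsup |\Lambda_p|/p^2 \leq 1/2$. Fix $P\in\{1,\ldots,p-1\}$; as $Q$ ranges over $\{1,\ldots,p-1\}$ the discriminant $\Delta=P^2-4Q$ runs bijectively through $\mathbb{Z}_p\setminus\{P^2\}$, since multiplication by $4$ is a bijection of $\mathbb{Z}_p$. By Case I of Section~\ref{prelemsection}, if $\Delta$ is a nonzero quadratic residue modulo $p$ then $[P,Q]$ is not complete, so completeness requires $\Delta$ to be zero or a nonresidue. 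Since $P^2$ is itself a nonzero quadratic residue, the number of elements of $\mathbb{Z}_p\setminus\{P^2\}$ that are either zero or a nonresidue equals $1+(p-1)/2=(p+1)/2$. Summing over $P$ yields the bound.

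\emph{Lower bound.} Lemma~\ref{boundbp} provides $|\Lambda_p|/p^2 \geq (p-3)|\mathcal{B}_p|/(2p^2)$, and since the number of elements of $\mathbb{Z}_p^{\ast}$ of order $d$ is $\phi(d)$ whenever $d\mid p-1$, we have $|\mathcal{B}_p|=\phi(p-1)+\phi((p-1)/2)$. Restricting to primes $p\equiv 3\pmod 4$ makes $(p-1)/2$ odd, so $\phi(p-1)=\phi((p-1)/2)$ and therefore $|\mathcal{B}_p|/(p-1)=\phi((p-1)/2)/((p-1)/2)$. Thus the lower bound reduces to producing arbitrarily large primes $p\equiv 3\pmod 4$ for which $\phi((p-1)/2)/((p-1)/2)$ approaches $1$, i.e., primes $p$ for which $(p-1)/2$ has only large prime factors.

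\emph{Sieving.} Given a cutoff $Y$, by Dirichlet's theorem combined with the Chinese Remainder Theorem there are infinitely many primes $p$ with $p\equiv 3\pmod 4$ and $p\not\equiv 1\pmod q$ for every odd prime $q\leq Y$; for any such $p$, every prime divisor of $(p-1)/2$ exceeds $Y$. A uniform sieve bound of Brun--Titchmarsh type lets one allow $Y$ to grow with $p$, say $Y(p)\sim\log p/\log\log p$; then $(p-1)/2$ has at most $\log p/\log Y$ distinct prime factors, so $\phi((p-1)/2)/((p-1)/2)\geq(1-1/Y)^{\log p/\log Y}\to 1$. Combined with the previous paragraph this gives $\limsup|\Lambda_p|/p^2\geq 1/2$.

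\emph{Main obstacle.} The delicate point is the uniformity of the sieve: for any \emph{fixed} $Y$ the needed primes exist by Dirichlet alone, but to drive $\phi((p-1)/2)/((p-1)/2)$ all the way to $1$ the cutoff $Y$ must grow with $p$, forcing the use of a uniform estimate such as Brun--Titchmarsh. A cleaner substitute is simply to quote the classical fact $\limsup_{p\to\infty}\phi(p-1)/(p-1)=1/2$, which packages exactly this sieve input into a single statement.
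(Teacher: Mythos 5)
Your upper bound is exactly the paper's: for each $P$ there are $(p+1)/2$ values of $Q$ with $P^2-4Q$ zero or a nonresidue, whence $|\Lambda_p|\leq\frac12(p^2-1)$. Your reduction of the lower bound is also essentially the paper's: restrict to $p\equiv 3\pmod 4$, note $|\mathcal B_p|=2\phi(p-1)$ there, and reduce to producing such primes with $\phi(p-1)/(p-1)\to\frac12$, i.e.\ with $(p-1)/2$ free of small prime factors. The difference is how that last input is obtained. The paper does not attempt a soft sieve: it quotes Lemma~1 of Heath-Brown \cite{HB}, which gives infinitely many $p\equiv3\pmod4$ with $(p-1)/2$ either prime or a product of two primes exceeding $p^{1/4}$, so that $\phi(p_i-1)/(p_i-1)>\frac12(1-p_i^{-1/4})^2$ at once. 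Your weaker target (all prime factors of $(p-1)/2$ exceeding a cutoff $Y(p)\to\infty$) would indeed suffice, but your justification of it is where the gap lies.

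Concretely: (1) for fixed $Y$, Dirichlet does produce infinitely many admissible $p$, but, as you note, the resulting bound $(1-1/Y)^{\log p/\log Y}$ tends to $0$ as $p\to\infty$, so $Y$ must grow. (2) Growing $Y$ cannot be extracted from Brun--Titchmarsh by the natural union bound: the primes $p\leq x$ with $p\equiv1\pmod q$ for some odd $q\leq Y$ number at most $\sum_{q\leq Y}\frac{2x}{(q-1)\log(x/q)}\asymp \frac{x\log\log Y}{\log x}$, which already exceeds $\pi(x;4,3)$ for $Y$ beyond an absolute constant, while the sifted set has density only $\asymp 1/\log Y$ among primes; the factor $2$ and the divergence of $\sum_{q\le Y}1/q$ kill the union bound. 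One needs either a genuine lower-bound sieve (fundamental lemma plus a level-of-distribution input such as Bombieri--Vinogradov), or a first-moment-plus-Markov argument exploiting the convergence of $\sum_{q>Y}\frac{1}{q(q-1)}$ over the fixed-$Y$ Dirichlet primes --- neither of which is what you describe. (3) Even granting the sieve, your choice $Y\sim\log p/\log\log p$ gives $Y\log Y\sim\log p$ and hence $(1-1/Y)^{\log p/\log Y}\to e^{-1}$, not $1$. (4) The fallback ``classical fact'' $\limsup_p\phi(p-1)/(p-1)=\frac12$ is true but insufficient as stated: you need it along $p\equiv3\pmod4$ (for $p\equiv1\pmod4$ one only gets $|\mathcal B_p|\leq\frac34(p-1)$), and a sequence realizing the unrestricted limsup need not lie in that residue class. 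The cleanest repair is simply to cite Heath-Brown's lemma as the paper does.
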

 
 \begin{proof}
 By \cite[Lemma 1]{HB}, there exist infinitely many primes $p_i \equiv 3 \pmod 4$, $i\geq 1$, such that $\frac{p_i-1}{2}$ is either a prime or a product of two primes $q_i,r_i$ such that $q_i,r_i>p_i^{1/4}$. Therefore
 $$\frac{1}{2} > \dfrac{\phi(p_i-1)}{p_i-1}>\frac{1}{2}\left (1-\frac{1}{p_i^{1/4}} \right )^2,$$
for all $i\geq 1$. Since the number of primitive roots modulo $p_i$ is given by $|{\mathcal A}_{p_i}|=\phi(\phi(p_i))=\phi(p_i-1)$, it follows that
 \begin{equation}\label{limsupa}
\lim_{i \rightarrow \infty}\frac{|{\mathcal A}_{p_i}|}{p_i}=\lim_{i \rightarrow \infty}\frac{\phi(p_i-1)}{p_i-1}=\frac{1}{2}.
\end{equation}
For every prime $p \equiv 3 \pmod 4$, the map $x \mapsto x^2$ is an injective map from ${\mathcal A}_p$ into ${\mathcal B}_p \backslash {\mathcal A}_p$. It follows that $|{\mathcal B}_p|=|{\mathcal A}_p|+|{\mathcal B}_p \backslash {\mathcal A}_p|\geq 2 |{\mathcal A}_p|$, and so 
 $$\lim_{i \rightarrow \infty}\frac{|{\mathcal B}_{p_i}|}{p_i} = 1.$$
Therefore, by Lemma \ref{boundbp}, we conclude that
\begin{equation}\label{limsupl}
\limsup_{p \rightarrow \infty}\frac{|\Lambda_{p}|}{p^2}  \geq \limsup_{i \rightarrow \infty}\frac{|\Lambda_{p_i}|}{p_i^2} \geq \lim_{i \rightarrow \infty}\frac{\frac{1}{2}(p_i-3)|{\mathcal B}_{p_i}|}{p_i^2} \geq  \frac{1}{2}.
\end{equation}
On the other hand, for every odd prime $p$, one has $|\Lambda_p| \leq \frac{1}{2}(p^2-1)$. To see this, we note that for each $P \in \{1,\ldots, p-1\}$, the number of $Q$ such that $P^2-4Q$ is zero or a quadratic nonresidue is $(p+1)/2$. It follows that for at most $\frac{1}{2}(p^2-1)$ pairs $[P,Q] \in \{1,\ldots, p-1\}^2$, the sequence $[P,Q]$ is complete modulo $p$, and so $|\Lambda_p| \leq \frac{1}{2}(p^2-1)$, which implies that $\limsup_{p \rightarrow \infty} |\Lambda_p|/p^2 \leq \frac{1}{2}$. This together with \eqref{limsupl} completes the proof of Theorem \ref{limsupap}.
  \end{proof}

\end{document}